\documentclass[preprint,a4paper,12pt]{elsarticle}

\usepackage{lineno}

\usepackage{amsmath}
\usepackage{amsfonts}
\usepackage{amssymb}
\usepackage{amsthm}

\usepackage{bm}
\usepackage{algorithm}
\usepackage{algpseudocode}

\usepackage{graphicx}
\usepackage{subcaption}
\usepackage{multirow} 
\usepackage{float} 

\theoremstyle{definition}
\newtheorem{theorem}{Theorem}[section]

\usepackage{xcolor}
\usepackage[colorlinks=true, allcolors=blue]{hyperref}

\begin{document}
	
\begin{frontmatter}
	
	
	
    \title{Hybrid Joint--Selective Optimization: Reduced-Space
    	Levenberg--Marquardt Refinement of Low-Dimensional Parameters of Interest}
	
	
	
	\author[inst1]{Muhammad Luthfi Shahab}

    \author[inst1]{Gabriella Alfa Indahsari}

	\author[inst1]{Imam Mukhlash}
    
	\author[inst2]{Hadi Susanto}

	\affiliation[inst1]{
    	organization={Department of Mathematics, Institut Teknologi Sepuluh Nopember},
    	city={Surabaya},
    	postcode={60111}, 
    	country={Indonesia}
    }
    
    \affiliation[inst2]{
    	organization={Department of Mathematics, Khalifa University of Science \& Technology},
    	city={Abu Dhabi},
    	postcode={PO Box 127788}, 
    	country={United Arab Emirates}
    }
		
	\begin{abstract}
		This paper introduces a hybrid joint--selective optimization (HJSO)
		framework for large-scale numerical problems in which a small subset of
		trainable quantities is of primary interest. We partition the full
		parameter vector into a high-dimensional remaining block and a
		low-dimensional block of parameters of interest (POIs), perform joint
		first-order optimization over the full parameter set, and then freeze the
		remaining variables while applying a reduced-space Levenberg--Marquardt
		(LM) refinement to the POIs. The method is designed for settings in which
		the POIs are low-dimensional but strongly influence the
		quality of the computed solution, while the full parameter space remains too
		large for full-space second-order methods.

		The framework is evaluated on three representative problems: a matrix
		eigenvalue problem, an inverse Bratu problem solved with a physics-informed
		neural network, and a 100-dimensional nonlinear Black--Scholes problem
		solved with the DeepBSDE method. In each test, HJSO reaches prescribed
		POI-error thresholds faster than the corresponding joint first-order
		baseline and improves the final POI accuracy for the reported solver
		configurations. The contribution is therefore not a universal optimizer, but a
		practical reduced-space strategy for problems with known low-dimensional
		parameters of interest and expensive high-dimensional training variables.
	\end{abstract}
	
	\begin{keyword}
		Hybrid joint--selective optimization \sep Joint optimization \sep
		Levenberg--Marquardt algorithm \sep Parameters of interest \sep
		Inverse problems \sep Physics-informed neural networks \sep DeepBSDE
	\end{keyword}

\end{frontmatter}


\section{Introduction}
\label{sec:introduction}

Large-scale numerical optimization is central to modern scientific
computing, especially in neural-network-based methods for differential
equations and stochastic control problems \cite{chen2018neural}. Physics-informed neural networks
(PINNs) \cite{raissi2019physics}, DeepBSDE formulations for high-dimensional
parabolic PDEs \cite{han2018solving}, and other data-driven discretizations
often involve a large number of trainable parameters. In these settings,
first-order methods such as gradient descent, steepest descent
\cite{chong2023introduction}, or Adam \cite{kingma2014adam} remain the
default choice because they scale naturally to large models and avoid the
cost of constructing or factorizing full Hessians.

However, many problems are not equally sensitive in all directions of the
parameter space. A small subset of variables may have a disproportionately
large effect on the quantity of interest, even when the total parameter
count is large. This is common in inverse problems, eigenvalue problems,
and stochastic PDE approximations, where the unknowns of primary interest may
be a scalar or a few coefficients rather than the full network parameter
vector itself. In PINNs, for example, imbalanced convergence among loss
components and gradient-flow pathologies are well documented
\cite{wang2021understanding,wang2022and}; in other settings, the final
accuracy of a target quantity can be limited by slow convergence of a
low-dimensional block that influences the solution disproportionately \cite{putri2024deep}.

This observation motivates a structured optimization viewpoint. Rather than
optimizing all parameters with a single generic first-order update, it can
be advantageous to distinguish between the large remaining parameter block
and a small set of quantities that are mathematically or physically central
to the application. This idea echoes classical separable-variable and
block-structured optimization methods, including variable projection
\cite{golub2003separable}, coordinate descent
\cite{nesterov2012efficiency,wright2015coordinate}, and block coordinate
updates \cite{blondel2013block,richtarik2016parallel}. Related ideas have
also been explored in neural-network training, where parameters are grouped
according to architecture or linear/nonlinear structure
\cite{mcloone1998hybrid,cyr2020robust,patel2020block}.

The present work differs in a key respect: the selected parameters are not
chosen solely by network architecture or algebraic structure, but by their
problem-specific role as low-dimensional \emph{parameters of interest} (POIs)
\cite{naderibeni2024learning}. We
consider settings such as an unknown eigenvalue in a spectral problem, an
unknown coefficient pair in an inverse differential equation, and an unknown
initial value in a DeepBSDE formulation. In all of these cases, the POIs are
few in number but they determine the numerical output of primary interest.
This structure is common in computational science and motivates reduced-space
optimization strategies that use second-order-type information only on the
small POI block rather than on the full parameter vector.

Accordingly, the trainable vector is partitioned into a high-dimensional
remaining block and a low-dimensional POI block. A standard joint method
updates both blocks simultaneously but does not exploit their different roles;
the formal partition is introduced in Section~\ref{subsec:hybrid_method}.

Curvature-based optimization provides a potential mechanism for accelerating
POI refinement. In particular, the Levenberg--Marquardt (LM) algorithm
\cite{levenberg1944method,marquardt1963algorithm} is well suited to nonlinear
least-squares problems because it exploits local Gauss--Newton curvature
information without requiring the exact Hessian. The LM algorithm has also
been applied to neural-network training, including its incorporation into
backpropagation for feedforward neural networks
\cite{hagan1994training}. Applying LM to the complete high-dimensional
parameter vector, however, can be computationally demanding because of the
associated Jacobian and linear-algebra costs. This motivates restricting
curvature-based refinement to a low-dimensional parameter subspace.

Hybrid strategies involving first-order and
curvature-based optimization have also been investigated. Berra et
al.~\cite{berra2024combined} combined projected Newton and gradient steps for
box-constrained root-finding problems. Costilla-Enriquez et
al.~\cite{costilla2020combining} combined Newton--Raphson and stochastic
gradient descent for power-flow analysis, switching between the two
algorithms according to convergence behavior. These approaches apply
different optimization algorithms over essentially the same variable space.
In contrast, the present work explicitly partitions the trainable parameters
and restricts curvature-based refinement to a low-dimensional POI block.

This structure motivates the \emph{hybrid joint--selective optimization}
(HJSO) method. Within each outer cycle, all trainable parameters are first
optimized jointly using a first-order method. The remaining parameters are
then fixed, and the LM algorithm selectively refines only the POIs. The
refined POIs are subsequently recombined with the remaining parameters to
initialize the next outer cycle. Thus, the defining feature of HJSO is not
merely the combination of first-order and curvature-based optimization, but
the joint optimization of the complete parameter vector followed by selective
curvature-based refinement of a low-dimensional POI block. Since the POI
dimension is much smaller than that of the remaining parameter space, the
resulting LM subproblem is substantially smaller than curvature-based
optimization of the complete model.

HJSO should be viewed as a reduced-space optimization framework rather than
as a new variant of LM. Its contribution is the systematic placement of a
standard damped Gauss--Newton refinement inside repeated joint optimization,
with the reduced variables chosen according to their mathematical role in the
problem. This interpretation also distinguishes the proposed strategy from
variable projection: HJSO does not require the POI block to be eliminated
exactly, and the remaining variables continue to be updated jointly with the
POIs between selective refinements.

The proposed method is evaluated on three numerical problems. First, the
largest eigenvalue of a $200\times200$ Lehmer matrix is considered, with the
eigenvalue treated as a scalar POI. Second, an inverse Bratu problem is solved
using PINNs \cite{shahab2026physics}, where two unknown coefficients,
$\lambda_1$ and $\lambda_2$, form a two-dimensional POI block. Finally, a
100-dimensional nonlinear Black--Scholes equation with default risk is solved
using DeepBSDE \cite{han2018solving}, where the unknown initial value
$u_0=u(0,\bm X_0)$ is treated as the POI. These examples cover scalar and
vector-valued POIs, deterministic and stochastic formulations, and low- and
high-dimensional numerical problems.

The main contributions of this work are summarized as follows:
\begin{enumerate}
	\item A hybrid joint--selective optimization framework is proposed that
	combines joint first-order optimization of all trainable parameters with
	selective LM refinement of a low-dimensional POI block.
	
	\item The framework accommodates both scalar and vector-valued POIs while
	restricting curvature-based optimization to a substantially smaller
	parameter subspace.
	
	\item The reduced-space formulation is discussed in terms of its
	computational role and its interaction with the joint optimization phase,
	which clarifies when selective refinement is a practical and scalable
	strategy for problems with low-dimensional POIs.
\end{enumerate}

The remainder of this paper is organized as follows. Section~\ref{subsec:hybrid_method}
presents the proposed HJSO formulation and computational algorithm.
Section~\ref{sec4} presents the numerical experiments, and
Section~\ref{sec:discussion} discusses their scope and limitations. Finally,
Section~\ref{sec:conclusion} summarizes the main conclusions and outlines
promising directions for future work.

\section{Hybrid Joint--Selective Optimization (HJSO)}
\label{subsec:hybrid_method}

We now formulate HJSO for the prescribed POI block introduced in
Section~\ref{sec:introduction}.

Let the full trainable parameter vector be partitioned as
\begin{equation}
\bm{\theta}
=
\begin{bmatrix}
\bm{\theta}_{\mathrm r}\\
\bm{\theta}_{\mathrm{poi}}
\end{bmatrix},
\qquad
\bm{\theta}_{\mathrm r}\in\mathbb{R}^{p},
\qquad
\bm{\theta}_{\mathrm{poi}}\in\mathbb{R}^{q},
\qquad
q\ll p,
\label{eq:parameter_partition}
\end{equation}
where $\bm{\theta}_{\mathrm r}$ denotes the high-dimensional remaining
parameter block and $\bm{\theta}_{\mathrm{poi}}$ denotes the low-dimensional
POI block. In neural-network-based problems, for example,
$\bm{\theta}_{\mathrm r}$ may contain the network weights and biases, while
$\bm{\theta}_{\mathrm{poi}}$ may encode model coefficients, eigenvalues, or
other quantities of primary scientific relevance.

In a conventional joint optimization strategy, all trainable parameters are
updated simultaneously. For example, gradient descent updates the complete
parameter vector according to
\begin{equation}
\bm{\theta}^{(m+1)}
=
\bm{\theta}^{(m)}
-
\alpha_m
\nabla_{\bm{\theta}}
\mathcal L
\left(
\bm{\theta}^{(m)}
\right),
\label{eq:standard_gd}
\end{equation}
where $\alpha_m>0$ is the learning rate and $\mathcal L$ is the objective
function. This is often attractive because it is simple and scalable, but it
does not distinguish between globally large parameter blocks and the small
subset that directly controls the output of interest.

The proposed framework, \emph{hybrid joint--selective optimization} (HJSO),
addresses this imbalance by alternating between an outer joint update and a
reduced-space refinement. The method is not tied to a specific optimizer in
principle: the joint phase may use any scalable first-order procedure, while
the selective phase may use a local reduced-space method for the POIs. In the
present implementation, a first-order optimizer is used in the joint phase,
while the Levenberg--Marquardt (LM) algorithm is used in the selective phase.
This combination is chosen because it preserves the scalability of
first-order optimization over the large parameter space while exploiting
curvature information only in the substantially smaller POI subspace.

The key structural idea is therefore to perform a full-space joint update,
freeze the large remaining block, and then refine only the low-dimensional
POI block. The refined POIs are then recombined with the remaining parameters
and used to initialize the next outer cycle. This makes the method particularly
relevant for large-scale scientific problems in which the dominant objective
is not the full parameter vector itself but a small set of physically or
mathematically meaningful quantities derived from it.
At the $k$-th outer cycle, $k=0,1,\ldots,T-1$, let
\begin{equation}
	\bm{\theta}^{(k)}
	=
	\begin{bmatrix}
		\bm{\theta}_{\mathrm r}^{(k)}\\
		\bm{\theta}_{\mathrm{poi}}^{(k)}
	\end{bmatrix}
\end{equation}
denote the current parameter vector. In the joint phase, consider the
optimization problem
\begin{equation}
	\min_{\bm{\theta}}
	\;
	\mathcal L(\bm{\theta}).
\end{equation}
Starting from
\begin{equation}
	\bm{\vartheta}^{[0]}
	=
	\bm{\theta}^{(k)},
	\label{eq:fo_initialization}
\end{equation}
where $\bm{\vartheta}^{[j]}$ denotes the $j$-th inner first-order iterate, the
complete parameter vector is updated according to
\begin{equation}
	\bm{\vartheta}^{[j+1]}
	=
	\operatorname{FOUpdate}
	\left(
	\bm{\vartheta}^{[j]}
	\right),
	\qquad
	j=0,1,\ldots,T_{\mathrm{FO}}-1,
	\label{eq:fo_inner_update}
\end{equation}
where $T_{\mathrm{FO}}$ is the prescribed maximum number of first-order
iterations. The joint phase may terminate earlier according to the stopping
criterion of the selected solver. Its final iterate is denoted by
\begin{equation}
	\bm{\theta}^{(k+\frac12)}
	=
	\begin{bmatrix}
		\bm{\theta}_{\mathrm r}^{(k+\frac12)}\\
		\bm{\theta}_{\mathrm{poi}}^{(k+\frac12)}
	\end{bmatrix}
	= \bm{\vartheta}^{[T_{\mathrm{FO}}]}.
	\label{eq:fo_final_update}
\end{equation}
Hence, both the remaining parameters and the POIs are updated during the joint
phase.

In the selective phase, the remaining parameter block is fixed at
\begin{equation}
	\overline{\bm{\theta}}_{\mathrm r}
	=
	\bm{\theta}_{\mathrm r}^{(k+\frac12)},
	\label{eq:frozen_parameters}
\end{equation}
and only the POIs are further optimized. The reduced problem is written in
nonlinear least-squares form as
\begin{equation}
	\min_{\bm{\theta}_{\mathrm{poi}}}
	\;
	\Phi(\bm{\theta}_{\mathrm{poi}})
	=
	\frac{1}{2}
	\left\|
	\bm r
	\left(
	\overline{\bm{\theta}}_{\mathrm r},
	\bm{\theta}_{\mathrm{poi}}
	\right)
	\right\|_2^2,
	\label{eq:reduced_problem}
\end{equation}
where
\begin{equation}
	\bm r
	\left(
	\overline{\bm{\theta}}_{\mathrm r},
	\bm{\theta}_{\mathrm{poi}}
	\right)
	=
	\begin{bmatrix}
		r_1\left(
		\overline{\bm{\theta}}_{\mathrm r},
		\bm{\theta}_{\mathrm{poi}}
		\right) &
		\cdots &
		r_{N_r}\left(
		\overline{\bm{\theta}}_{\mathrm r},
		\bm{\theta}_{\mathrm{poi}}
		\right)
	\end{bmatrix}^{T}
	\in\mathbb R^{N_r}
\end{equation}
is the residual vector.

The Jacobian of the residual with respect to the POIs is
\begin{equation}
	\bm J_{\mathrm{poi}}
	=
	\frac{
		\partial
		\bm r
		\left(
		\overline{\bm{\theta}}_{\mathrm r},
		\bm{\theta}_{\mathrm{poi}}
		\right)
	}{
		\partial\bm{\theta}_{\mathrm{poi}}
	}
	\in\mathbb{R}^{N_r\times q}.
	\label{eq:poi_jacobian}
\end{equation}
For $q=1$, $\bm J_{\mathrm{poi}}$ consists of a single sensitivity column,
whereas for $q>1$ it contains one column for each POI. LM is particularly
suitable for this reduced nonlinear least-squares problem because it exploits
Gauss--Newton curvature information while introducing a damping term that
improves robustness when
$\bm J_{\mathrm{poi}}^{T}\bm J_{\mathrm{poi}}$ is singular,
rank-deficient, or ill-conditioned \cite{shahab2026physics}.

The selective phase is initialized by
\begin{equation}
	\bm{\theta}_{\mathrm{poi}}^{[0]}
	=
	\bm{\theta}_{\mathrm{poi}}^{(k+\frac12)}.
	\label{eq:selective_initialization}
\end{equation}
At the $j$-th LM refinement, the increment is obtained from
\begin{equation}
	\left[
	\left(\bm J_{\mathrm{poi}}^{[j]}\right)^T
	\bm J_{\mathrm{poi}}^{[j]}
	+
	\mu_j\bm I_q
	\right]
	\Delta\bm{\theta}_{\mathrm{poi}}^{[j]}
	=
	\left(\bm J_{\mathrm{poi}}^{[j]}\right)^T
	\bm r^{[j]},
	\label{eq:lm_system}
\end{equation}
followed by
\begin{equation}
	\bm{\theta}_{\mathrm{poi}}^{[j+1]}
	=
	\bm{\theta}_{\mathrm{poi}}^{[j]}
	-
	\Delta\bm{\theta}_{\mathrm{poi}}^{[j]},
	\qquad
	j=0,1,\ldots,T_{\mathrm{LM}}-1,
	\label{eq:lm_update}
\end{equation}
where
\begin{equation}
	\bm r^{[j]}
	=
	\bm r
	\left(
	\overline{\bm{\theta}}_{\mathrm r},
	\bm{\theta}_{\mathrm{poi}}^{[j]}
	\right),
\end{equation}
$\mu_j>0$ is the LM damping parameter, and
$\bm I_q\in\mathbb R^{q\times q}$ is the identity matrix. The selective phase
may terminate before reaching $T_{\mathrm{LM}}$ iterations according to the
stopping criterion of the LM solver. Depending on the numerical software, its
computational budget may instead be specified by a maximum number of residual
function evaluations.

The damping parameter allows LM to interpolate between Gauss--Newton and a
gradient-like update. For sufficiently small $\mu_j$,
Eq.~\eqref{eq:lm_system} approaches
\begin{equation}
	\bm J_{\mathrm{poi}}^{T}\bm J_{\mathrm{poi}}
	\Delta\bm{\theta}_{\mathrm{poi}}
	=
	\bm J_{\mathrm{poi}}^{T}\bm r,
\end{equation}
which is the Gauss--Newton system. For relatively large $\mu_j$,
\begin{equation}
	\Delta\bm{\theta}_{\mathrm{poi}}
	\approx
	\frac{1}{\mu_j}
	\bm J_{\mathrm{poi}}^{T}\bm r.
\end{equation}
Since
\begin{equation}
	\nabla_{\bm{\theta}_{\mathrm{poi}}}\Phi
	=
	\bm J_{\mathrm{poi}}^{T}\bm r,
\end{equation}
the latter resembles a scaled gradient step. LM therefore combines local
curvature information with damping that improves numerical robustness.

Let $j_k$ denote the final accepted selective iterate in outer cycle $k$;
$j_k=T_{\mathrm{LM}}$ if the iteration limit is reached. After the selective
phase, the refined POIs are recombined with the fixed remaining parameters,
\begin{equation}
	\bm{\theta}^{(k+1)}
	=
	\begin{bmatrix}
		\bm{\theta}_{\mathrm r}^{(k+\frac12)}\\
		\bm{\theta}_{\mathrm{poi}}^{[j_k]}
	\end{bmatrix}.
	\label{eq:reintegrate_parameter}
\end{equation}
The resulting vector initializes the joint phase of the next outer cycle. 
The complete HJSO procedure is summarized
in Algorithm~\ref{alg:hybrid_fo_lm}.

The principal structural advantage of HJSO follows from $q\ll p$.
Curvature-based optimization is restricted to the $q$-dimensional POI
subspace, while the high-dimensional remaining parameter block is handled by
a scalable first-order method. The two phases therefore play complementary
roles: the joint phase allows the complete parameter vector to adapt
simultaneously, whereas the selective phase provides focused refinement of the
POIs. These refined POIs are then returned to the complete parameter vector
and influence the joint optimization in the next outer cycle.

The small size of the linear system does not, by itself, make the selective
phase inexpensive. If the residual contains $N_r$ components, forming a dense
POI Jacobian and its normal matrix requires, respectively,
$O(N_rq)$ storage and $O(N_rq^2)$ arithmetic, in addition to the cost of
residual and sensitivity evaluations. Solving the resulting dense LM system
costs $O(q^3)$. Consequently, selective refinement is most attractive when
$q$ is small and residual/Jacobian evaluations can be reused, differentiated
efficiently, or computed at a cost well below curvature calculations in the
full $(p+q)$-dimensional space.

The following elementary property records the descent mechanism used by the
framework. It is conditional because practical first-order and LM solvers may
terminate inexactly and stochastic objectives may change between batches.

\begin{theorem}[Conditional cycle-wise descent]
Suppose that, during one outer cycle, the joint phase returns
$\bm\theta^{(k+\frac12)}$ satisfying
\begin{equation}
 \mathcal L\!\left(\bm\theta^{(k+\frac12)}\right)
 \leq \mathcal L\!\left(\bm\theta^{(k)}\right).
\end{equation}
Assume further that, with $\bm\theta_{\mathrm r}$ fixed, the joint objective
can be written as
\begin{equation}
 \mathcal L(\overline{\bm\theta}_{\mathrm r},
 \bm\theta_{\mathrm{poi}})
 =c\,\Phi(\bm\theta_{\mathrm{poi}})+C,
 \qquad c>0,
 \label{eq:selective_objective_compatibility}
\end{equation}
where $C$ may depend on the frozen block
$\overline{\bm\theta}_{\mathrm r}$ but is independent of
$\bm\theta_{\mathrm{poi}}$ during the selective phase. If every accepted LM
step does not increase $\Phi$, then the completed HJSO cycle satisfies
\begin{equation}
 \mathcal L\!\left(\bm\theta^{(k+1)}\right)
 \leq \mathcal L\!\left(\bm\theta^{(k)}\right).
\end{equation}
\end{theorem}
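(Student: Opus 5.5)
The plan is to chain two inequalities, one per phase, and use the compatibility assumption \eqref{eq:selective_objective_compatibility} to move the LM descent from $\Phi$ to $\mathcal L$.

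\emph{Joint phase.} The first inequality, $\mathcal L(\bm\theta^{(k+\frac12)})\le\mathcal L(\bm\theta^{(k)})$, is assumed outright. Nothing about $\operatorname{FOUpdate}$ is needed beyond this.

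\emph{Selective phase.} The selective iterates are $\bm\theta_{\mathrm{poi}}^{[0]},\bm\theta_{\mathrm{poi}}^{[1]},\ldots,\bm\theta_{\mathrm{poi}}^{[j_k]}$, starting from \eqref{eq:selective_initialization}. Only accepted steps are kept: a rejected trial step leaves the iterate unchanged, so I index only accepted iterates. By hypothesis each accepted step does not increase $\Phi$, so $\Phi(\bm\theta_{\mathrm{poi}}^{[j+1]})\le\Phi(\bm\theta_{\mathrm{poi}}^{[j]})$. A finite induction on $j$ then gives
\begin{equation*}
\Phi\bigl(\bm\theta_{\mathrm{poi}}^{[j_k]}\bigr)\le\Phi\bigl(\bm\theta_{\mathrm{poi}}^{[0]}\bigr)=\Phi\bigl(\bm\theta_{\mathrm{poi}}^{(k+\frac12)}\bigr).
\end{equation*}
The case $j_k=0$, where no step is accepted, is trivial.

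\emph{Transfer to $\mathcal L$.} Throughout the selective phase the remaining block is frozen at $\overline{\bm\theta}_{\mathrm r}=\bm\theta_{\mathrm r}^{(k+\frac12)}$ by \eqref{eq:frozen_parameters}. The reintegration \eqref{eq:reintegrate_parameter} pairs the new POIs with this same block, so $\bm\theta^{(k+1)}=(\overline{\bm\theta}_{\mathrm r},\bm\theta_{\mathrm{poi}}^{[j_k]})$ and $\bm\theta^{(k+\frac12)}=(\overline{\bm\theta}_{\mathrm r},\bm\theta_{\mathrm{poi}}^{[0]})$. Both points therefore lie on the slice where \eqref{eq:selective_objective_compatibility} holds with the same constants $c$ and $C$. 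Multiplying the $\Phi$-inequality by $c>0$ preserves its direction, and adding $C$ gives
\begin{equation*}
\mathcal L\bigl(\bm\theta^{(k+1)}\bigr)=c\,\Phi\bigl(\bm\theta_{\mathrm{poi}}^{[j_k]}\bigr)+C\le c\,\Phi\bigl(\bm\theta_{\mathrm{poi}}^{[0]}\bigr)+C=\mathcal L\bigl(\bm\theta^{(k+\frac12)}\bigr).
\end{equation*}

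\emph{Conclusion and main obstacle.} Chaining this with the joint-phase inequality gives the claim. There is no real analytical difficulty. The one point needing care is that $C$ must be the same constant at both endpoints, and this holds only because the frozen block used in the LM subproblem is exactly the block reinserted in \eqref{eq:reintegrate_parameter}. I would also state explicitly that the loss is a fixed deterministic function within the cycle, with no batch change between the two phases, so that the same $\mathcal L$ is compared throughout.
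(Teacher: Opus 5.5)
Your proposal is correct and follows the paper's proof essentially step for step. Both take the assumed joint-phase decrease, derive $\Phi(\bm\theta_{\mathrm{poi}}^{[j_k]})\le\Phi(\bm\theta_{\mathrm{poi}}^{(k+\frac12)})$ from the accepted LM steps, and transfer this to $\mathcal L$ through the compatibility condition, using that the same $c>0$ and $C$ apply at both endpoints because the frozen block is exactly the one reinserted. Your explicit induction over accepted iterates and your remark that the objective must not be resampled within the cycle are harmless elaborations of points the paper states more briefly.
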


\begin{proof}
Let $j_k$ denote the final accepted selective iterate. The LM acceptance
condition requires
$\Phi(\bm\theta_{\mathrm{poi}}^{[j_k]})
\leq\Phi(\bm\theta_{\mathrm{poi}}^{(k+\frac12)})$.
Moreover, the initial and final selective-phase vectors have the same frozen
remaining block $\overline{\bm\theta}_{\mathrm r}$. Since $c>0$, multiplying
this inequality by $c$ and adding the same constant $C$ preserves its
direction. Equation~\eqref{eq:selective_objective_compatibility} and the
assumed joint-phase decrease therefore give the complete chain
\begin{align*}
 \mathcal L\!\left(\bm\theta^{(k+1)}\right)
 &=c\,\Phi\!\left(\bm\theta_{\mathrm{poi}}^{[j_k]}\right)+C \\
 &\leq c\,\Phi\!\left(
 \bm\theta_{\mathrm{poi}}^{(k+\frac12)}\right)+C \\
 &=\mathcal L\!\left(\bm\theta^{(k+\frac12)}\right)
 \leq \mathcal L\!\left(\bm\theta^{(k)}\right).
\end{align*}
This is the stated cycle-wise descent.
\end{proof}

The compatibility condition holds when the omitted residual terms are
constant with respect to the frozen-block selective problem, as in the
experiments below. The theorem does not assert convergence to a global
minimizer, nor does it apply directly when independently resampled stochastic
batches define successive objective values.

\begin{algorithm}[H]
	\caption{Hybrid Joint--Selective Optimization (HJSO)}
	\label{alg:hybrid_fo_lm}
	\begin{algorithmic}[1]
		
		\Require Initial parameters
		$\bm{\theta}^{(0)}
		=
		\begin{bmatrix}
			\bm{\theta}_{\mathrm r}^{(0)}\\
			\bm{\theta}_{\mathrm{poi}}^{(0)}
		\end{bmatrix}$;
		number of outer cycles $T$;
		maximum first-order iterations $T_{\mathrm{FO}}$;
		maximum LM iterations $T_{\mathrm{LM}}$
		
		\Ensure Optimized parameters $\bm{\theta}^{*}$
		
		\For{$k=0,1,\ldots,T-1$}
		
		\Statex
		\State \textbf{Joint phase: optimize all parameters}
		
		\State
		$\bm{\vartheta}^{[0]}
		\gets
		\bm{\theta}^{(k)}$
		
		\For{$j=0,1,\ldots,T_{\mathrm{FO}}-1$}
		
		\State
		$\bm{\vartheta}^{[j+1]}
		\gets
		\operatorname{FOUpdate}
		\left(
		\bm{\vartheta}^{[j]}
		\right)$
		
		\If{first-order stopping criterion is satisfied}
		\State \textbf{break}
		\EndIf
		
		\EndFor
		
		\State
		$\bm{\theta}^{(k+\frac12)}
		\gets
		\bm{\vartheta}^{[j+1]}$
		
		\State Split
		$\bm{\theta}^{(k+\frac12)}
		=
		\begin{bmatrix}
			\bm{\theta}_{\mathrm r}^{(k+\frac12)}\\
			\bm{\theta}_{\mathrm{poi}}^{(k+\frac12)}
		\end{bmatrix}$
		
		\Statex
		\State \textbf{Selective phase: refine only the POIs}
		
		\State Freeze
		$\overline{\bm{\theta}}_{\mathrm r}
		\gets
		\bm{\theta}_{\mathrm r}^{(k+\frac12)}$
		
		\State
		$\bm{\theta}_{\mathrm{poi}}^{[0]}
		\gets
		\bm{\theta}_{\mathrm{poi}}^{(k+\frac12)}$
		
		\For{$j=0,1,\ldots,T_{\mathrm{LM}}-1$}
		
		\State
		$\bm{\theta}_{\mathrm{poi}}^{[j+1]}
		\gets
		\operatorname{LMUpdate}
		\left(
		\overline{\bm{\theta}}_{\mathrm r},
		\bm{\theta}_{\mathrm{poi}}^{[j]}
		\right)$
		
		\If{LM stopping criterion is satisfied}
		\State \textbf{break}
		\EndIf
		
		\EndFor
		
		
		\Statex
		\State \textbf{Recombination}
		
		\State
		$\bm{\theta}^{(k+1)}
		\gets
		\begin{bmatrix}
			\bm{\theta}_{\mathrm r}^{(k+\frac12)}\\
			\bm{\theta}_{\mathrm{poi}}^{[j+1]}
		\end{bmatrix}$
		
		\EndFor
		
		\State \Return
		$\bm{\theta}^{*}
		\gets
		\bm{\theta}^{(T)}$
		
	\end{algorithmic}
\end{algorithm}

\section{Experimental Results}
\label{sec4}

The proposed HJSO method is evaluated on three representative numerical
problems with distinct characteristics: a matrix eigenvalue problem, an
inverse Bratu problem solved using physics-informed neural networks (PINNs)
\cite{shahab2026physics}, and a 100-dimensional nonlinear Black--Scholes
problem solved using the DeepBSDE framework \cite{han2018solving}. These
examples are chosen to test the framework in different settings: a simple
low-dimensional spectral parameter, a nonlinear inverse problem with a
learned state representation, and a high-dimensional stochastic PDE
approximation.

For each problem, HJSO is compared with the corresponding standard
\emph{joint optimization} (JO) baseline under the same problem formulation,
initialization, and first-order optimizer. In JO, all trainable parameters
are updated jointly without the selective POI refinement phase. This setup is
intended to isolate the incremental effect of the reduced-space selective
update while keeping the rest of the optimization problem unchanged.

Performance is measured in terms of convergence speed, final solution or
parameter accuracy, loss reduction, and wall-clock time. Since each HJSO
outer cycle includes both a full-space joint phase and an additional
reduced-space LM step, wall-clock time is used for the comparison rather than
iteration count alone. The optimization and problem-specific settings are
summarized in Tables~\ref{tab:optimization_settings}
and~\ref{tab:problem_settings}.

The comparisons are designed to test the mechanism of selective refinement,
not to claim universal superiority over all optimization methods. JO isolates
the benefit of adding the selective phase, but stronger full-space,
block-structured, or problem-specific solvers may lead to different rankings.
Likewise, wall-clock times depend on the implementation and hardware. The
reported timings are therefore most informative as within-example comparisons
under the present settings.

In the implementation, the selective LM refinement in HJSO is performed
using standard LM solvers. For the eigenvalue and inverse Bratu problems,
MATLAB's \texttt{fsolve} with the Levenberg--Marquardt algorithm is used,
whereas SciPy's \texttt{least\_squares} with \texttt{method='lm'} is used for
the DeepBSDE problem. The Jacobian approximation, damping adjustment, step
acceptance, and internal stopping criteria are therefore delegated to the
corresponding numerical solver. 

\begin{table}[h]
	\scriptsize
	\centering
	\caption{Optimization settings used for JO and HJSO in the numerical experiments.}
	\label{tab:optimization_settings}
	\begin{tabular}{lccc}
		\hline
		Setting
		& Eigenvalue Problem
		& Inverse Bratu PINNs
		& Black--Scholes DeepBSDE \\
		\hline
		
		Outer cycles, $T$
		& 200
		& 400
		& 100 \\
		
		Joint-phase solver
		& MATLAB \texttt{fminunc}
		& MATLAB \texttt{fminunc}
		& TensorFlow \texttt{Adam} \\
		
		Joint-phase algorithm
		& Steepest descent
		& Steepest descent
		& Adam \\
		
		Joint-phase iterations, $T_{\mathrm{FO}}$
		& 50
		& 50
		& 100 \\
		
		Joint-phase step tolerance
		& $10^{-12}$
		& $10^{-12}$
		& -- \\
		
		Selective-phase solver
		& MATLAB \texttt{fsolve}
		& MATLAB \texttt{fsolve}
		& SciPy \texttt{least\_squares} \\
		
		Selective-phase algorithm
		& Levenberg--Marquardt
		& Levenberg--Marquardt
		& Levenberg--Marquardt \\
		
		Selective-phase iterations, $T_{\mathrm{LM}}$
		& 10
		& 10
		& 10 \\
		
		Selective-phase step tolerance
		& $10^{-12}$
		& $10^{-12}$
		& $10^{-6}$ \\
		
		POI dimension, $q$
		& 1
		& 2
		& 1 \\
		\hline
	\end{tabular}
\end{table}

\begin{table}[h]
	\scriptsize
	\centering
	\caption{Problem-specific settings used in the numerical experiments.}
	\label{tab:problem_settings}
	\begin{tabular}{llll}
		\hline
		Setting
		& Eigenvalue Problem
		& Inverse Bratu PINNs
		& Black--Scholes DeepBSDE \\
		\hline
		
		Problem dimension
		& $n=200$
		& One-dimensional
		& $d=100$ \\
		
		Parameter(s) of interest
		& $\lambda$
		& $(\lambda_1,\lambda_2)$
		& $u_0$ \\
		
		Initial POI(s)
		& $\lambda^{(0)}=120$
		& $(\lambda_1^{(0)},\lambda_2^{(0)})=(0,0)$
		& $u_0^{(0)}=100$ \\
		
		Remaining trainable parameters
		& Eigenvector $\bm v$
		& NN weights and biases $\bm w$
		& DeepBSDE NN parameters \\
		
		Neural-network architecture
		& --
		& $\mathrm{NN}(1,20,20,1)$
		& Hidden layers $(110,110)$ \\
		
		Training/collocation points
		& --
		& $99$
		& Batch size $64$ \\
		
		Fixed LM/monitoring samples
		& --
		& --
		& $256$ \\
		
		Time horizon
		& --
		& --
		& $T_{\mathrm{BS}}=1$ \\
		
		Time intervals
		& --
		& --
		& $40$ \\
		
		Data weighting parameter
		& --
		& $\alpha=100$
		& -- \\
		
		Initial remaining parameters
		& Normalized all-ones vector
		& Glorot-type initialization
		& DeepBSDE initialization \\
		\hline
	\end{tabular}
\end{table}

\subsection{Eigenvalue Problem}
\label{subsec:eigenvalue_problem}

The first numerical experiment considers the matrix eigenvalue problem. Given
a matrix $A\in\mathbb{R}^{n\times n}$, the objective is to determine a scalar
$\lambda\in\mathbb{R}$ and a nonzero vector
$\bm v\in\mathbb{R}^{n}$ satisfying
\begin{equation}
	A\bm v=\lambda\bm v,
	\label{eq:eigenval_problem2}
\end{equation}
where $\lambda$ denotes an eigenvalue and $\bm v$ is its corresponding
eigenvector.

In this experiment, a Lehmer matrix of dimension $n=200$ is employed. Its
entries are defined by
\begin{equation}
	A_{ij}
	=
	\frac{\min(i,j)}{\max(i,j)},
	\qquad
	i,j=1,2,\ldots,n.
	\label{eq:lehmer_matrix}
\end{equation}
The Lehmer matrix is symmetric and positive definite, therefore all of its
eigenvalues are real and positive. The objective is to determine its largest
eigenvalue, whose reference value is
$\lambda^{*}\approx109.2516$, together with the corresponding eigenvector.

The eigenvalue problem is formulated as an optimization problem in which the
eigenvalue and eigenvector are estimated simultaneously and iteratively from
prescribed initial values.
For the joint phase, the objective function is defined as
\begin{equation}
	\mathcal L(\lambda,\bm v)
	=
	\frac{1}{2}
	\left\|
	A\bm v-\lambda\bm v
	\right\|_2^2.
	\label{eq:eigen_loss}
\end{equation}
The initial eigenvalue is set to
\begin{equation}
	\lambda^{(0)}=120,
\end{equation}
while the initial eigenvector is chosen as the normalized all-ones vector,
\begin{equation}
	\bm v^{(0)}
	=
	\frac{1}{\sqrt{n}}
	\begin{bmatrix}
		1 & 1 & \cdots & 1
	\end{bmatrix}^{T}.
	\label{eq:eigen_initial_vector}
\end{equation}
To prevent convergence toward the trivial solution $\bm v=\bm 0$, the
eigenvector is normalized according to
\begin{equation}
	\overline{\bm v}
	=
	\frac{\bm v}{\|\bm v\|_2}.
	\label{eq:eigenvector_renormalization}
\end{equation}
This normalization is applied throughout the numerical procedure.

Following the parameter decomposition introduced in
Section~\ref{subsec:hybrid_method}, the parameter blocks are defined as
\begin{equation}
	\bm{\theta}_{\mathrm r}
	=
	\bm v,
	\qquad
	\bm{\theta}_{\mathrm{poi}}
	=
	\lambda.
	\label{eq:eigen_parameter_partition}
\end{equation}
During the joint phase, $\lambda$ and $\bm v$ are optimized simultaneously.
After this phase, the resulting eigenvector is normalized and held fixed,
while LM is applied exclusively to the POI $\lambda$ in the selective phase.

For a fixed normalized eigenvector $\overline{\bm v}$, the reduced problem
solved during the selective phase is
\begin{equation}
	\min_{\lambda}
	\;
	\Phi(\lambda)
	=
	\frac{1}{2}
	\left\|
	\bm r(\lambda,\overline{\bm v})
	\right\|_2^2
	=
	\frac{1}{2}
	\left\|
	A\overline{\bm v}-\lambda\overline{\bm v}
	\right\|_2^2.
	\label{eq:eigen_reduced_problem}
\end{equation}
The Jacobian of the residual with respect to the scalar POI is
\begin{equation}
	\bm J_{\mathrm{poi}}
	=
	\frac{\partial\bm r}
	{\partial\lambda}
	=
	-\overline{\bm v}.
	\label{eq:eigen_special_jacobian}
\end{equation}

This example has an additional structure that makes the selective update
particularly transparent. Because $\|\overline{\bm v}\|_2=1$, the exact
minimizer of Eq.~\eqref{eq:eigen_reduced_problem} is the Rayleigh quotient
\begin{equation}
 \lambda_{\mathrm{sel}}
 =\overline{\bm v}^{T}A\overline{\bm v}.
 \label{eq:eigen_rayleigh_update}
\end{equation}
Thus, an undamped Gauss--Newton step obtains the reduced minimizer in one
iteration, and LM approaches this update as its damping decreases. The
eigenvalue experiment is consequently an illustrative limiting case of HJSO,
not evidence that an iterative LM solve is preferable to the available
closed-form update. In addition, the residual objective vanishes at every
normalized eigenpair; selection of the largest eigenpair in this experiment
depends on the stated initialization and is not guaranteed by the residual
objective alone.

\begin{figure}[h]
	\centering
	
	\begin{subfigure}[b]{0.49\textwidth}
		\centering
		\includegraphics[width=0.9\textwidth]{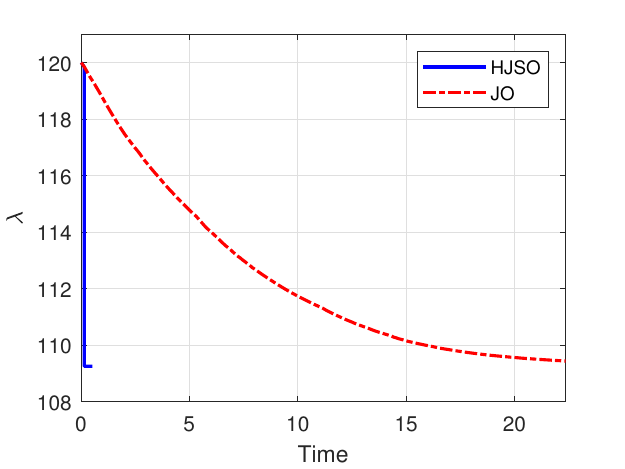}
		\caption{}
		\label{fig_eigenvalue_loss}
	\end{subfigure}
	\begin{subfigure}[b]{0.49\textwidth}
		\centering
		\includegraphics[width=0.9\textwidth]{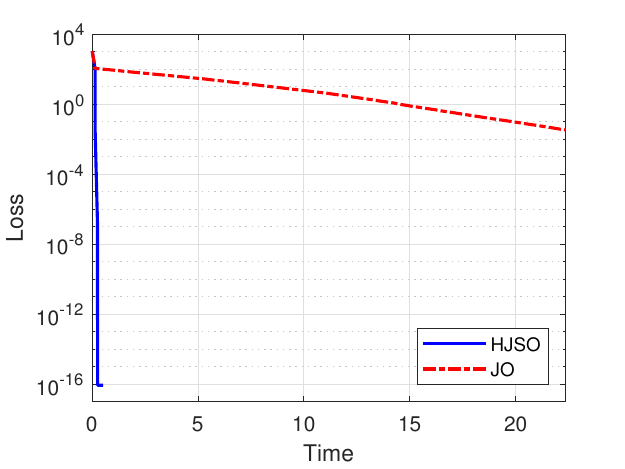}
		\caption{}
		\label{fig_eigenvalue_lambda}
	\end{subfigure}
	
	\begin{subfigure}[b]{0.49\textwidth}
		\centering
		\includegraphics[width=0.9\textwidth]{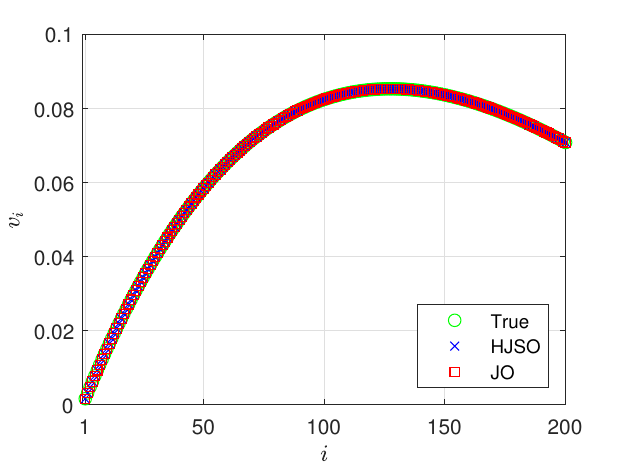}
		\caption{}
		\label{fig_eigenvalue_solution}
	\end{subfigure}
	\begin{subfigure}[b]{0.49\textwidth}
		\centering
		\includegraphics[width=0.9\textwidth]{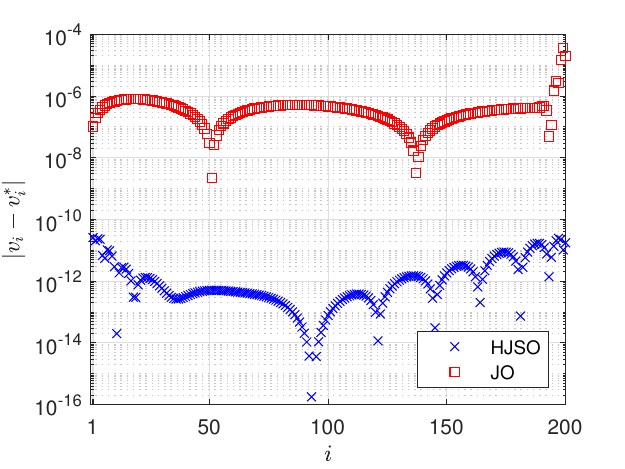}
		\caption{}
		\label{fig_eigenvalue_error}
	\end{subfigure}
	
	\caption{Numerical results for the largest eigenpair of the
		$200\times200$ Lehmer matrix:
		(a) loss versus computational time;
		(b) estimated largest eigenvalue versus computational time;
		(c) reference and computed eigenvectors; and
		(d) pointwise absolute error of the computed eigenvector.}
	\label{fig_eigenvalue}
\end{figure}

Figure~\ref{fig_eigenvalue} presents the numerical comparison between HJSO
and JO. As shown in Figure~\ref{fig_eigenvalue_loss}, HJSO rapidly reduces the
loss and reaches a low-loss regime considerably earlier than JO, whose loss
decreases more gradually over the reported computational interval. This
behavior indicates that selective LM refinement of the eigenvalue accelerates
the reduction of the eigenpair residual.
Figure~\ref{fig_eigenvalue_lambda} shows the convergence of the estimated
eigenvalue. Both methods start from $\lambda^{(0)}=120$ and approach the
reference value $\lambda^{*}\approx109.2516$. However, HJSO approaches the
reference value substantially earlier in computational time than JO.
Figures~\ref{fig_eigenvalue_solution} and~\ref{fig_eigenvalue_error} compare
the corresponding eigenvectors. The computed eigenvectors reproduce the
reference profile, while the pointwise errors provide a more detailed
assessment of their numerical accuracy.

\begin{table}[H]
	\footnotesize
	\centering
	\caption{Comparison of JO and HJSO for the largest eigenvalue of the
		$200\times200$ Lehmer matrix.}
	\label{tab:eigenvalue_comparison}
	\begin{tabular}{cccccc}
		\hline
		Method & Loss & $\lambda$ & APE $\lambda$ & Time (s) &
		Time to 1\% APE (s) \\ \hline
		HJSO & $8.5\times10^{-17}$ & 109.2516 & $8.4\times10^{-13}$ & 0.54 & 0.13 \\
		JO   & $3.4\times10^{-2}$  & 109.4358 & $1.6\times10^{-1}$  & 22   & 14   \\ \hline
	\end{tabular}
\end{table}

The final numerical results are summarized in
Table~\ref{tab:eigenvalue_comparison}, including the absolute percentage error
(APE) of the estimated eigenvalue and the computational time required to reach
an APE of $1\%$. HJSO obtains an eigenvalue of $109.2516$ with an APE of
$8.4\times10^{-13}\%$, whereas JO obtains $109.4358$ with an APE of
$1.6\times10^{-1}\%$. HJSO reaches an APE of $1\%$ in approximately
$0.13$~s, compared with $14$~s for JO.

These results illustrate the effect of selective reduced-space refinement of
the scalar POI. The joint phase updates the eigenvalue and eigenvector
simultaneously, whereas the selective phase holds the normalized eigenvector
fixed and moves the eigenvalue toward the Rayleigh quotient in
Eq.~\eqref{eq:eigen_rayleigh_update}. The large improvement relative to JO in
this example should therefore be interpreted in light of this favorable
closed-form reduced structure.

\subsection{Bratu Equation in an Inverse-Problem PINN Framework}
\label{subsec:bratu_inverse}

The second numerical experiment considers an inverse problem associated with a
two-parameter form of the Bratu equation \cite{shahab2026physics},
\begin{equation}
	\frac{d^2u}{dx^2}
	+
	\lambda_1\exp\left(\lambda_2 u\right)
	=
	0,
	\qquad
	x\in(0,1),
	\label{eq:bratu_general}
\end{equation}
subject to the homogeneous Dirichlet boundary conditions
\begin{equation}
	u(0)=u(1)=0.
	\label{eq:bratu_bc}
\end{equation}
Here, $u(x)$ denotes the unknown solution, while $\lambda_1$ and
$\lambda_2$ are unknown parameters to be identified.

The reference parameters are set to
\begin{equation}
	\lambda_1^{*}=2,
	\qquad
	\lambda_2^{*}=1,
	\label{eq:bratu_true_parameters}
\end{equation}
with the corresponding analytical solution \cite{shahab2025finite, shahab2024neural}
\begin{equation}
	u^{*}(x)
	=
	2
	\log
	\left[
	\frac{\cosh(\vartheta)}
	{\cosh\left(\vartheta(1-2x)\right)}
	\right],
	\label{eq:bratu_true_solution}
\end{equation}
where
\begin{equation}
	\vartheta\approx0.589387763469351.
\end{equation}

To solve the inverse problem using PINNs, the computational domain is
discretized at the interior points
\begin{equation}
	x_i=0.01i,
	\qquad
	i=1,2,\ldots,99,
	\label{eq:bratu_points}
\end{equation}
giving $N_f=N_u=99$ collocation and training points. A fully connected
feedforward neural network with architecture $\mathrm{NN}(1,20,20,1)$ is
employed to approximate the solution \cite{shahab2026physics}. Let
$\tilde{u}(x;\bm w)$ denote the raw network output, where $\bm w$ contains the
network weights and biases.
The boundary conditions are imposed exactly through
\begin{equation}
	u(x;\bm w)
	=
	x(1-x)\tilde{u}(x;\bm w),
	\label{eq:bratu_hard_bc}
\end{equation}
which automatically satisfies $u(0)=u(1)=0$.

The joint-phase loss is defined as
\begin{equation}
	\mathcal L
	=
	\mathrm{MSE}_{f}
	+
	\alpha^2\mathrm{MSE}_{u},
	\label{eq:bratu_loss}
\end{equation}
where
\begin{align}
	\mathrm{MSE}_{f}
	&=
	\frac{1}{N_f}
	\sum_{i=1}^{N_f}
	\left(
	u_{xx}(x_i)
	+
	\lambda_1
	\exp\left(\lambda_2u(x_i)\right)
	\right)^2,
	\label{eq:bratu_msef}
	\\
	\mathrm{MSE}_{u}
	&=
	\frac{1}{N_u}
	\sum_{i=1}^{N_u}
	\left(
	u(x_i)-u^{*}(x_i)
	\right)^2.
	\label{eq:bratu_mseu}
\end{align}
The network weights are initialized using a Glorot-type uniform
initialization, while the biases are initialized to zero. The POIs are
initialized as
\begin{equation}
	\bm{\theta}_{\mathrm{poi}}^{(0)}
	=
	\begin{bmatrix}
		0\\
		0
	\end{bmatrix}.
	\label{eq:bratu_lambda_initial}
\end{equation}

Following the parameter decomposition introduced in
Section~\ref{subsec:hybrid_method}, the parameter blocks are defined as
\begin{equation}
	\bm{\theta}_{\mathrm r}
	=
	\bm w,
	\qquad
	\bm{\theta}_{\mathrm{poi}}
	=
	\begin{bmatrix}
		\lambda_1\\
		\lambda_2
	\end{bmatrix}.
	\label{eq:bratu_parameter_partition}
\end{equation}
During the joint phase, $\bm w$, $\lambda_1$, and $\lambda_2$ are optimized
simultaneously. After this phase, the neural-network parameters are held fixed,
while LM is applied only to the coupled POI block
$(\lambda_1,\lambda_2)^T$.

For fixed neural-network parameters
$\overline{\bm{\theta}}_{\mathrm r}=\overline{\bm w}$, the reduced problem
solved during the selective phase is
\begin{equation}
	\min_{\lambda_1,\lambda_2}
	\;
	\Phi(\lambda_1,\lambda_2)
	=
	\frac{1}{2}
	\left\|
	\bm r_f
	\left(
	\overline{\bm w},
	\lambda_1,
	\lambda_2
	\right)
	\right\|_2^2,
	\label{eq:bratu_reduced_problem}
\end{equation}
where the physics residual vector is
\begin{equation}
	\bm r_f
	=
	\begin{bmatrix}
		r_f(x_1)\\
		r_f(x_2)\\
		\vdots\\
		r_f(x_{N_f})
	\end{bmatrix},
	\qquad
	r_f(x_i)
	=
	u_{xx}(x_i;\overline{\bm w})
	+
	\lambda_1
	\exp\left[
	\lambda_2u(x_i;\overline{\bm w})
	\right].
	\label{eq:bratu_residual_vector}
\end{equation}
The data residual does not appear in the reduced problem because, once
$\bm w$ is fixed, it is independent of $\lambda_1$ and $\lambda_2$ and
therefore does not affect the selective POI refinement.
The Jacobian of the physics residual with respect to the POIs is
\begin{equation}
	\bm J_{\mathrm{poi}}
	=
	\begin{bmatrix}
		\displaystyle
		\frac{\partial\bm r_f}{\partial\lambda_1}
		&
		\displaystyle
		\frac{\partial\bm r_f}{\partial\lambda_2}
	\end{bmatrix}
	\in\mathbb{R}^{N_f\times2},
	\label{eq:pinn_special_jacobian}
\end{equation}
with row $i$ given by
\begin{equation}
	\left[
	\exp\left(\lambda_2u(x_i)\right),
	\quad
	\lambda_1u(x_i)
	\exp\left(\lambda_2u(x_i)\right)
	\right].
\end{equation}

\begin{figure}[h]
	\centering
	
	\begin{subfigure}[b]{0.49\textwidth}
		\centering
		\includegraphics[width=0.9\textwidth]{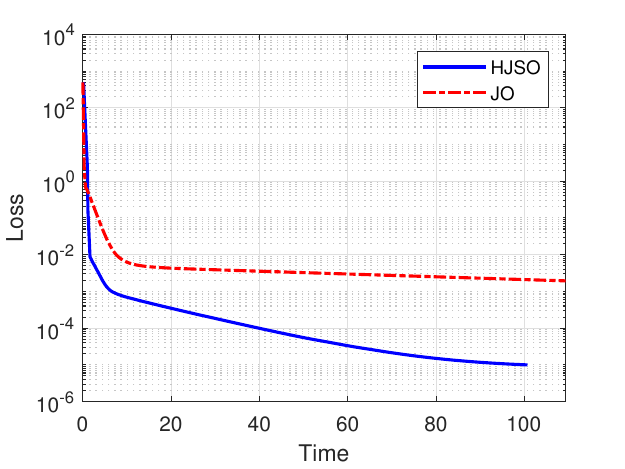}
		\caption{}
		\label{fig_bratu_loss}
	\end{subfigure}
	\begin{subfigure}[b]{0.49\textwidth}
		\centering
		\includegraphics[width=0.9\textwidth]{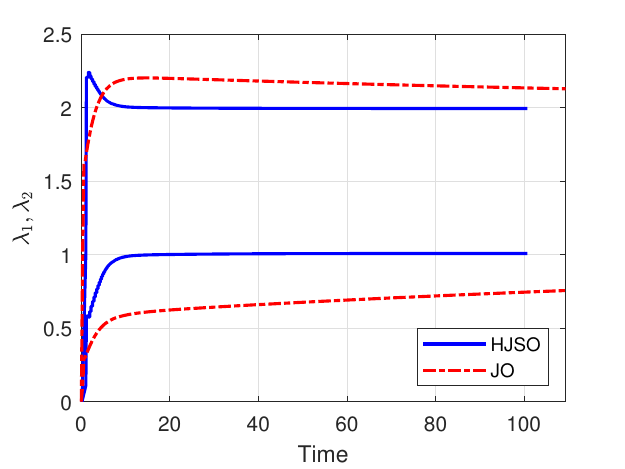}
		\caption{}
		\label{fig_bratu_lambda}
	\end{subfigure}
	
	\begin{subfigure}[b]{0.49\textwidth}
		\centering
		\includegraphics[width=0.9\textwidth]{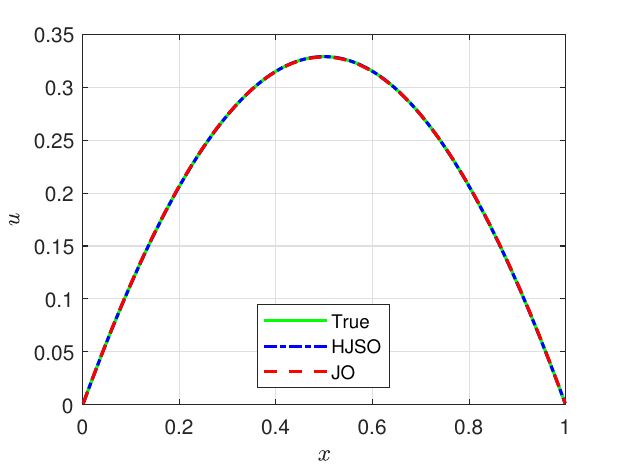}
		\caption{}
		\label{fig_bratu_solution}
	\end{subfigure}
	\begin{subfigure}[b]{0.49\textwidth}
		\centering
		\includegraphics[width=0.9\textwidth]{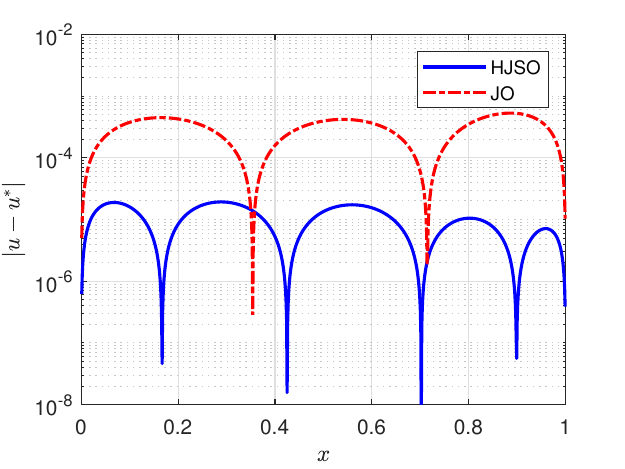}
		\caption{}
		\label{fig_bratu_error}
	\end{subfigure}
	
	\caption{Numerical results for the inverse Bratu problem:
		(a) loss versus computational time;
		(b) identified parameters $\lambda_1$ and $\lambda_2$ versus
		computational time;
		(c) analytical reference and predicted solutions; and
		(d) pointwise absolute error $|u-u^{*}|$.}
	\label{fig_bratu}
\end{figure}

Figure~\ref{fig_bratu} presents the numerical comparison between HJSO and JO.
As shown in Figure~\ref{fig_bratu_loss}, HJSO rapidly reduces the loss and
reaches a substantially lower final value than JO. In contrast, JO exhibits a
rapid initial reduction followed by a considerably slower convergence regime.
This behavior indicates that the selective LM phase provides additional
refinement after the joint optimization.
Figure~\ref{fig_bratu_lambda} shows the convergence of the identified
parameters. HJSO rapidly approaches the reference values
$(\lambda_1^{*},\lambda_2^{*})=(2,1)$ and remains close to them, whereas the
parameters obtained using JO remain noticeably separated from the reference
values over the reported computational interval.
Figures~\ref{fig_bratu_solution} and~\ref{fig_bratu_error} compare the
corresponding solution approximations. Both methods reproduce the overall
reference solution profile, but HJSO provides a more accurate pointwise
approximation over most of the domain. The results also show that a visually
accurate state approximation does not necessarily imply equally accurate
identification of the unknown parameters.

\begin{table}[H]
	\footnotesize
	\centering
	\caption{Comparison of JO and HJSO for the inverse Bratu problem.}
	\label{tab:bratu_comparison}
	\begin{tabular}{cccccccc}
		\hline
		Method & Loss & $\lambda_1$ & $\lambda_2$ &
		APE $\lambda_1$ & APE $\lambda_2$ & Time (s) & Time to 1\% APE (s) \\ \hline
		HJSO & $9.9\times10^{-6}$ & 1.9955 & 1.0082 &
		0.2236 & 0.8250 & 99 & 9.1 \\
		JO & $1.9\times10^{-3}$ & 2.1295 & 0.7559 &
		6.4772 & 24.4068 & 109 & -- \\ \hline
	\end{tabular}
\end{table}

The final numerical results are summarized in
Table~\ref{tab:bratu_comparison}, including the APEs of the identified
parameters and the computational time required for both POIs to reach an APE
below $1\%$. HJSO identifies $\lambda_1=1.9955$ and $\lambda_2=1.0082$,
with APEs of $0.2236\%$ and $0.8250\%$, respectively. In comparison, JO
obtains $\lambda_1=2.1295$ and $\lambda_2=0.7559$, with APEs of $6.4772\%$
and $24.4068\%$, respectively. HJSO reaches an APE below $1\%$ for both POIs
in $9.1678$~s, whereas JO does not reach this threshold during the reported
computational interval. HJSO also achieves a lower final loss
$9.9\times10^{-6}$ compared with $1.9\times10^{-3}$ for JO.

These results show how selective refinement behaves for a two-dimensional POI
block. The joint phase updates the neural-network
parameters and both unknown coefficients simultaneously, whereas the
selective phase fixes the network parameters and directly refines
$(\lambda_1,\lambda_2)^T$. This experiment therefore extends the HJSO
mechanism from a scalar POI to multiple coupled POIs.

The experiment uses dense, noise-free synthetic observations at the same
locations as the collocation points. It is designed to isolate optimization
behavior under controlled conditions, rather than to establish robustness to
sparse or noisy inverse data. In particular, the reported parameter errors
should not be extrapolated to observational settings with measurement noise
or model discrepancy.

\subsection{High-Dimensional Black--Scholes Equation Using the DeepBSDE Method}
\label{subsec:deepbsde_blackscholes}

The third numerical experiment considers a high-dimensional nonlinear
Black--Scholes equation with default risk, introduced as a benchmark problem
for the Deep Backward Stochastic Differential Equation (DeepBSDE) method
\cite{han2018solving}. The model incorporates the possibility of default of
the claim issuer, resulting in a nonlinear pricing equation.
Let
\begin{equation}
	u:[0,T_{\mathrm{BS}}]\times\mathbb{R}^{d}
	\rightarrow\mathbb{R}
\end{equation}
denote the value of a European contingent claim depending on $d$ underlying
assets. For the default-risk model considered here, the underlying assets
follow independent geometric Brownian motions,
\begin{equation}
	dX_t^{i}
	=
	\bar{\mu}X_t^{i}\,dt
	+
	\bar{\sigma}X_t^{i}\,dW_t^{i},
	\qquad
	i=1,2,\ldots,d,
	\label{eq:blackscholes_forward_sde}
\end{equation}
and the resulting nonlinear Black--Scholes equation is
\begin{equation}
	\frac{\partial u}{\partial t}
	+
	\bar{\mu}\bm{x}\cdot\nabla_{\bm{x}}u
	+
	\frac{\bar{\sigma}^{2}}{2}
	\sum_{i=1}^{d}
	x_i^{2}
	\frac{\partial^{2}u}{\partial x_i^{2}}
	-
	(1-\delta)Q(u)u
	-
	Ru
	=
	0,
	\label{eq:blackscholes_default}
\end{equation}
where $\delta$ is the recovery rate, $R$ is the risk-free interest rate, and
the default intensity is defined by
\begin{equation}
	Q(u)
	=
	\begin{cases}
		\gamma^{h},
		& u<v^{h},
		\\[1mm]
		\displaystyle
		\gamma^{h}
		+
		\frac{\gamma^{l}-\gamma^{h}}
		{v^{l}-v^{h}}
		(u-v^{h}),
		& v^{h}\leq u<v^{l},
		\\[3mm]
		\gamma^{l},
		& u\geq v^{l}.
	\end{cases}
	\label{eq:default_intensity}
\end{equation}

Following the benchmark in \cite{han2018solving}, the dimension is $d=100$,
with
\begin{equation}
	T_{\mathrm{BS}}=1,
	\qquad
	\delta=\frac{2}{3},
	\qquad
	R=0.02,
	\qquad
	\bar{\mu}=0.02,
	\qquad
	\bar{\sigma}=0.2,
	\label{eq:blackscholes_parameters1}
\end{equation}
and
\begin{equation}
	v^{h}=50,
	\qquad
	v^{l}=70,
	\qquad
	\gamma^{h}=0.2,
	\qquad
	\gamma^{l}=0.02.
	\label{eq:blackscholes_parameters2}
\end{equation}
The terminal payoff is
\begin{equation}
	g(\bm{x})
	=
	\min_{1\leq i\leq d}x_i,
	\label{eq:blackscholes_terminal_payoff}
\end{equation}
and the initial state is
\begin{equation}
	\bm{X}_0
	=
	\begin{bmatrix}
		100 & 100 & \cdots & 100
	\end{bmatrix}^{T}
	\in\mathbb{R}^{100}.
	\label{eq:blackscholes_initial_state}
\end{equation}
The reference value reported for this benchmark is
$u^{*}(0,\bm{X}_0)\approx57.3$ \cite{han2018solving}. Since no closed-form
solution is available, this benchmark value is used as the reference. Its
reported precision should be kept in mind when interpreting percentage errors
below the one-percent level.

The DeepBSDE method reformulates the PDE through the corresponding
forward--backward stochastic differential equations. Along a stochastic
trajectory, define
\begin{equation}
	u_t
	=
	u(t,\bm{X}_t),
	\qquad
	\bm{Z}_t
	=
	\bm{\sigma}(t,\bm{X}_t)^{T}
	\nabla_{\bm{x}}u(t,\bm{X}_t),
	\label{eq:YZ_definition}
\end{equation}
where $u_t$ denotes the solution evaluated along the forward stochastic
process. The corresponding backward process satisfies
\begin{equation}
	du_t
	=
	-
	f(t,\bm{X}_t,u_t,\bm{Z}_t)\,dt
	+
	\bm{Z}_t^{T}d\bm{W}_t,
	\label{eq:backward_sde}
\end{equation}
where $f$ denotes the nonlinear generator associated with the PDE, subject to
the terminal condition
\begin{equation}
	u_{T_{\mathrm{BS}}}
	=
	g(\bm{X}_{T_{\mathrm{BS}}}).
	\label{eq:bsde_terminal}
\end{equation}
The interval $[0,T_{\mathrm{BS}}]$ is divided into $N=40$ subintervals. 
The forward and backward processes are
approximated by
\begin{equation}
	\bm{X}_{n+1}
	=
	\bm{X}_{n}
	+
	\bm{\mu}(t_n,\bm{X}_n)\Delta t_n
	+
	\bm{\sigma}(t_n,\bm{X}_n)
	\Delta\bm{W}_n,
	\label{eq:deepbsde_forward_discrete}
\end{equation}
and
\begin{equation}
	u_{n+1}
	=
	u_n
	-
	f(t_n,\bm{X}_n,u_n,\bm{Z}_n)\Delta t_n
	+
	\bm{Z}_n^{T}\Delta\bm{W}_n,
	\label{eq:deepbsde_backward_discrete}
\end{equation}
respectively.

The unknown process $\bm Z_n$ is approximated using neural networks,
\begin{equation}
	\bm{Z}_n
	\approx
	\mathcal{N}_n
	\left(
	\bm{X}_n;\bm{w}_n
	\right),
	\label{eq:deepbsde_z_network}
\end{equation}
with two hidden layers of 110 neurons each. Starting from the unknown initial
value $u_0$, the system is propagated to the terminal time. The initial value
$u_0$ and the neural-network parameters are then determined by minimizing the
terminal discrepancy.
The joint-phase loss is defined as
\begin{equation}
	\mathcal L
	=
	\frac{1}{M}
	\sum_{m=1}^{M}
	\left(
	u_N^{(m)}
	-
	g(\bm{X}_N^{(m)})
	\right)^2
	=
	\frac{1}{M}
	\|\bm r\|_2^2,
	\label{eq:deepbsde_loss}
\end{equation}
where $M$ denotes the number of simulated trajectories and the terminal
residual for the $m$-th trajectory is
\begin{equation}
	r_m
	=
	u_N^{(m)}
	-
	g\left(\bm{X}_N^{(m)}\right),
	\qquad
	m=1,2,\ldots,M.
	\label{eq:deepbsde_residual}
\end{equation}
In the numerical implementation, the joint phase uses stochastic mini-batches
of 64 trajectories, while a fixed set of 256 trajectories is used for the
selective LM refinement and for monitoring the reported loss. To examine
convergence from an initialization far from the reference value, the POI is
initialized as
\begin{equation}
	u_0^{(0)}=100.
	\label{eq:deepbsde_initial_y0}
\end{equation}

Following the parameter decomposition introduced in
Section~\ref{subsec:hybrid_method}, the parameter blocks are defined as
\begin{equation}
	\bm{\theta}_{\mathrm r}
	=
	\bm{\theta}_{\mathrm{nn}},
	\qquad
	\bm{\theta}_{\mathrm{poi}}
	=
	u_0,
	\label{eq:deepbsde_parameter_partition}
\end{equation}
where $\bm{\theta}_{\mathrm{nn}}$ contains the neural-network parameters used
to approximate $\bm Z_n$. The scalar $u_0$ is selected as the POI because it
directly represents the desired solution $u(0,\bm X_0)$.
During the joint phase, $\bm{\theta}_{\mathrm{nn}}$ and $u_0$ are optimized
simultaneously. After this phase, the neural-network parameters are held fixed,
while LM is applied exclusively to $u_0$. The optimization settings are
summarized in Table~\ref{tab:optimization_settings}.

For fixed neural-network parameters
$\overline{\bm{\theta}}_{\mathrm r}$, the reduced problem solved during the
selective phase is
\begin{equation}
	\min_{u_0}
	\;
	\Phi(u_0)
	=
	\frac{1}{2}
	\left\|
	\bm r
	\left(
	\overline{\bm{\theta}}_{\mathrm r},
	u_0
	\right)
	\right\|_2^2,
	\label{eq:deepbsde_reduced_problem}
\end{equation}
where
\begin{equation}
	\bm r
	\left(
	\overline{\bm{\theta}}_{\mathrm r},
	u_0
	\right)
	=
	\begin{bmatrix}
		u_N^{(1)}-g(\bm X_N^{(1)})\\
		u_N^{(2)}-g(\bm X_N^{(2)})\\
		\vdots\\
		u_N^{(M)}-g(\bm X_N^{(M)})
	\end{bmatrix}
	\in\mathbb{R}^{M}.
	\label{eq:deepbsde_selective_residual}
\end{equation}
Here, the neural-network parameters are fixed at
$\overline{\bm{\theta}}_{\mathrm r}$, so the terminal residual depends only
on the scalar POI $u_0$ during the selective refinement.
The corresponding POI Jacobian is
\begin{equation}
	\bm J_{\mathrm{poi}}
	=
	\frac{\partial\bm r}
	{\partial u_0}
	\in\mathbb{R}^{M\times1},
	\label{eq:deepbsde_special_jacobian}
\end{equation}
which is approximated using finite differences in the numerical
implementation. Hence, despite the 100-dimensional stochastic problem and
the high-dimensional neural-network parameter space, the selective LM problem
remains scalar.

\begin{figure}[h]
	\centering
	
	\begin{subfigure}[b]{0.49\textwidth}
		\centering
		\includegraphics[width=0.9\textwidth]{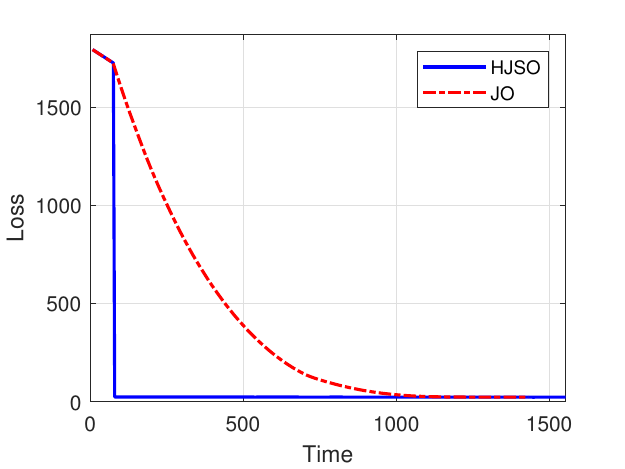}
		\caption{}
		\label{fig_deepbsde_loss}
	\end{subfigure}
	\begin{subfigure}[b]{0.49\textwidth}
		\centering
		\includegraphics[width=0.9\textwidth]{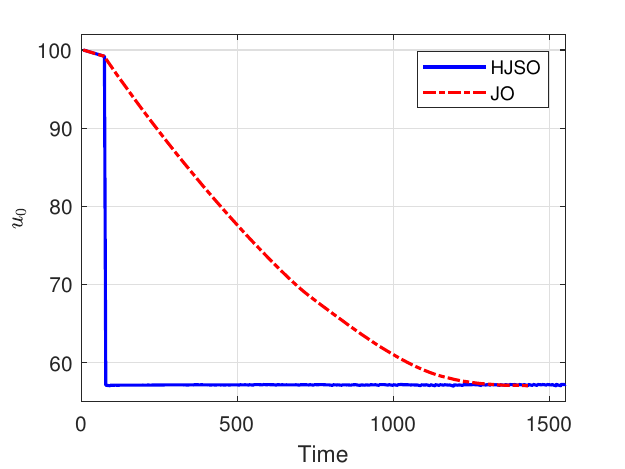}
		\caption{}
		\label{fig_deepbsde_u0}
	\end{subfigure}
	
	\caption{Numerical results for the 100-dimensional nonlinear
		Black--Scholes problem:
		(a) monitored fixed-sample loss versus computational time; and
		(b) estimated initial value
		$u_0=u(0,\bm X_0)$ versus computational time.}
	\label{fig_deepbsde}
\end{figure}

Figure~\ref{fig_deepbsde} presents the numerical comparison between HJSO and
JO. As shown in Figure~\ref{fig_deepbsde_loss}, both HJSO and JO reduce the
monitored loss to similar levels, with HJSO achieving a slightly lower final
value. Because the fixed trajectories are also used during selective LM
refinement, this quantity is a monitoring loss rather than an independent
out-of-sample validation loss.
Figure~\ref{fig_deepbsde_u0} shows the convergence of the estimated initial
value. Both methods start from $u_0^{(0)}=100$ and approach the reference
value $57.3$. However, HJSO moves the POI toward the reference value
substantially earlier in computational time than JO.

\begin{table}[H]
	\footnotesize
	\centering
	\caption{Comparison of JO and HJSO for the 100-dimensional nonlinear
		Black--Scholes problem.}
	\label{tab:black_scholes_comparison}
	\begin{tabular}{cccccc}
		\hline
		Method & Loss & $u_0$ & APE $u_0$ & Time (s) & Time to 1\% APE (s) \\ \hline
		HJSO & 22.6285 & 57.1889 & 0.1938 & 1553 & 78 \\
		JO   & 22.7697 & 57.0165 & 0.4948 & 1432 & 1198 \\ \hline
	\end{tabular}
\end{table}

The final numerical results are summarized in
Table~\ref{tab:black_scholes_comparison}, including the APE of the estimated
initial value and the computational time required to reach an APE of $1\%$.
HJSO obtains $u_0=57.1889$ with an APE of $0.1938\%$, whereas JO obtains
$u_0=57.0165$ with an APE of $0.4948\%$. HJSO reaches an APE of $1\%$ in
$78$~s, compared with $1198$~s for JO. Although the total computational time of HJSO is slightly longer due to the additional LM refinement, it reaches the $1\%$ APE threshold substantially
earlier.

These results illustrate the HJSO mechanism in a high-dimensional stochastic
setting. The joint phase optimizes the neural-network parameters together with
the POI, whereas the selective phase fixes the network parameters and directly
refines the scalar quantity $u_0=u(0,\bm X_0)$. Thus, the curvature-based
refinement remains low-dimensional despite the high dimensionality of the
underlying problem. The experiment supports faster POI refinement for this
fixed sampling configuration, but it does not by itself quantify variation
across independent trajectory sets or random initializations.

%

\section{Discussion and Limitations}
\label{sec:discussion}

The numerical experiments consistently indicate that, when the quantity of
interest is tied to a small parameter block, a reduced-space LM update can
accelerate convergence relative to continued joint first-order updates. This
is the central empirical observation behind HJSO. The method is therefore
best understood as a structured reduced-space refinement strategy rather than
as a universal replacement for standard optimizers.

Several points are important for interpreting the reported results. First,
HJSO does not resolve the nonconvexity of the underlying joint optimization
problem and does not guarantee the selection of a desired solution branch.
Second, the cost of evaluating residuals and Jacobians can still dominate the
computation, even when the LM system itself is small. Third, the present
experiments isolate the value of the selective phase by comparing against a
joint first-order baseline; they are not intended as a broad benchmarking
study against all first-order, quasi-Newton, block-coordinate, or
variable-projection alternatives. Finally, the results correspond to the
specified initializations and sampling configurations, and the PINN and
DeepBSDE examples do not yet establish a full statistical robustness study
across random seeds or data realizations.

These limitations define the intended scope of the method. HJSO is most
appropriate when the POIs are known in advance, their dimension is small,
the governing residual is informative in that reduced subspace, and the full
parameter vector is too large to allow affordable full-space second-order
updates. Under these conditions, the method provides a practical compromise
between scalability and targeted refinement of the quantities that matter most.

\section{Conclusion}
\label{sec:conclusion}

In this work, we proposed a hybrid joint--selective optimization (HJSO)
framework for problems in which the full parameter vector is large, but the
parameter block of primary interest is low-dimensional. The method partitions the
trainable variables into a high-dimensional remaining block,
$\bm{\theta}_{\mathrm r}\in\mathbb{R}^{p}$, and a low-dimensional POI block,
$\bm{\theta}_{\mathrm{poi}}\in\mathbb{R}^{q}$ with $q\ll p$. A first-order
optimization stage updates the full parameter set, and a reduced-space LM
refinement then updates the POIs while holding the remaining variables fixed.
This combination preserves the scalability of the joint optimization step
while exploiting curvature information only in the reduced subspace that
matters most for the target quantity.

The method was evaluated on three representative problems: an eigenvalue
problem for a $200\times200$ Lehmer matrix, an inverse Bratu problem formulated with a
PINN, and a high-dimensional nonlinear Black--Scholes problem solved with the
DeepBSDE method. In all three cases, HJSO reached prescribed POI-error
thresholds faster than the corresponding joint first-order baseline and
produced more accurate final estimates for the target quantities under the
stated settings.

Within the scope stated in Section~\ref{sec:discussion}, the results support
HJSO as a practical reduced-space strategy when a small, prescribed parameter
block directly governs the target output.

Future work may consider larger POI blocks, systematic selection of POIs,
comparisons with quasi-Newton and variable-projection methods, and statistical
studies across initializations and stochastic realizations. These directions
would clarify the range of applicability of the method and help establish
when reduced-space refinement is most beneficial.


%

\section*{Declaration of competing interest} 
The authors declare that they have no known competing financial interests or personal relationships that could have appeared to influence the work reported in this paper.



\section*{Declaration of generative AI and AI-assisted technologies in the writing process}

During the preparation of this work, the authors used ChatGPT to improve the language and readability. 
After using this tool, the authors reviewed and edited the content as needed and take full responsibility for the content of the publication.

\bibliographystyle{elsarticle-num} 
\bibliography{main_references}

%
%

\end{document}